\newtheorem{theorem}{Theorem}
\newtheorem{lemma}{Lemma}
\newtheorem{problem}{Problem}
\newtheorem{conjecture}{Conjecture}
\begin{document}

  \articletype{...}

  \author[1]{Yusuf Hafidh}
  \author[3]{Rizki Kurniawan}
  \author[1]{Suhadi Saputro}
  \author*[0]{Rinovia Simanjuntak}
  \author[3]{Steven Tanujaya}
  \author[1]{Saladin Uttunggadewa}
  \runningauthor{Simanjuntak \emph{et. al.}}
  \affil[0]{Combinatorial Mathematics Research Group, Faculty of Mathematics and Natural Sciences, Institut Teknologi Bandung}
  \affil[1]{Combinatorial Mathematics Research Group, Faculty of Mathematics and Natural Sciences, Institut Teknologi Bandung}
  \affil[3]{Bachelor Program in Mathematics, Faculty of Mathematics and Natural Sciences, Institut Teknologi Bandung}
  \title{Multiset Dimensions of Trees}
  \runningtitle{Multiset Dimensions of Trees}
  \subtitle{...}
  \abstract{
  Let $G$ be a connected graph and $W$ be a set of vertices of $G$. The representation multiset of a vertex $v$ with respect to $W$, $r_m (v|W)$, is defined as a multiset of distances between $v$ and the vertices in $W$. If $r_m (u |W) \neq r_m(v|W)$ for every pair of distinct vertices $u$ and $v$, then $W$ is called an m-resolving set of $G$. If $G$ has an m-resolving set, then the cardinality of a smallest m-resolving set is called the multiset dimension of $G$, denoted by $md(G)$; otherwise, we say that $md(G) = \infty$.

  In this paper, we show that for a tree $T$ of diameter at least 2, if $md(T) < \infty$, then $md(T) \leq n-2$. We conjecture that this bound is not sharp in general and propose a sharp upper bound. We shall also provide necessary and sufficient conditions for caterpillars and lobsters having finite multiset dimension. Our results partially settled a conjecture and an open problem proposed in \cite{SSV}.
  }

  \keywords{m-resolving set, multiset dimension, trees}
  \classification[PACS]{...}
  \communicated{...}
  \dedication{...}
  \received{...}
  \accepted{...}
  \journalname{...}
  \journalyear{...}
  \journalvolume{..}
  \journalissue{..}
  \startpage{1}
  \aop
  \DOI{...}

\maketitle

\section{Introduction}

Let $G$ be a simple and connected graph with vertex set $V(G)$. The \emph{distance $d(u,v)$ between two vertices $u,v \in V(G)$} is the length of a shortest path between them. The \emph{eccentricity of a vertex $v$}, $ecc(v)$ is the maximum distance from $v$ to other vertices in $G$. The \emph{radius of $G$} is $rad(G):=\min\{ecc(v):v\in G\}$. A \emph{center of $G$} is a vertex  with the smallest eccentricity (equal to the radius) and the set of centers of $G$ is denoted by $C(G)$. The diameter of $G$ is $diam(G):=\max\{ecc(v):v\in G\}$ and an \emph{end-vertex} is a vertex with the highest eccentricity (equal to the diameter).

The concept of multiset dimension was introduced in \cite{SSV} as a natural variation of metric dimension. In both concepts, the location of a vertex in a graph is uniquely identified by utilising the distance from that vertex to a set of "landmarks". Each vertex is then allocated a distinct "coordinate": in metric dimension, the coordinates are vectors, while in mustiset dimension, the coordinates are instead multisets.
	
For an ordered set of $k$ vertices $W = \{ w_1, w_2, \dots , w_k \}$, the \emph{representation of a vertex $v$ with respect to $W$} is the ordered $k$-tuple $$r(v|W) = ( d(v,w_1), d(v,w_2), \dots , d(v,w_z)).$$ $W$ is a \emph{resolving set of $G$} if every two vertices of $G$ have distinct representations. A resolving set with minimum cardinality is called a \emph{basis} and the number of vertices in a basis is called the \emph{metric dimension}, denoted by $dim(G)$.

Now suppose that $W'$ is an unordered subset of $V(G)$. The \emph{representation multiset of $v$ with respect to $W'$}, $r_m (v|W')$, is defined as a multiset of distances between $v$ and the vertices in $W'$. If $r_m (u|W') \neq r_m(v|W')$ for every pair of distinct vertices $u$ and $v$, then $W'$ is called an \emph{m-resolving set} of $G$. If $G$ has an m-resolving set, then the cardinality of a smallest m-resolving set is called the \emph{multiset dimension of $G$}, denoted by $md(G)$; otherwise, we say that $md(G) = \infty$.

In \cite{SSV}, a few basic results of multiset dimension were proved. Here we list those connected to the results of this paper.
\begin{lemma} \cite{SSV}
\begin{enumerate}
  \item The multiset dimension of a graph $G$ is one if and only if $G$ is a path. \label{path}
  \item No graph has multiset dimension $2$. \label{not2}
  \item Let $G$ be a graph other than a path. Then $md(G) \ge 3$. \label{bound1}
  \item If $G$ is a non-path graph of diameter at most $2$, then $md(G) = \infty$. \label{diam2}
\end{enumerate}
\end{lemma}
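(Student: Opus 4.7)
The plan is to prove the four parts in the order stated, using the fact that the only handle a multiset representation gives us is the multiplicity of each distance value, so the core combinatorial task is controlling degeneracies forced by that symmetry.

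For part 1, the \emph{if} direction is immediate: choosing an end-vertex of $P_n$ as the single landmark gives representations $\{0\},\{1\},\ldots,\{n-1\}$. For the \emph{only if} direction I would start from a singleton m-resolving set $\{w\}$ and observe that the distinctness of all $d(v,w)$ together with the fact that every distance is at most $ecc(w)\le n-1$ forces the values to be exactly $0,1,\ldots,n-1$, so each BFS layer from $w$ is a singleton. A short argument---each vertex at distance $i$ can only be adjacent to vertices in layers $i-1, i, i+1$, and those layers are singletons---then pins $G$ down as a path.

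Part 2 should dispatch in one line from multiset symmetry: for $W=\{u,v\}$ we have $r_m(u|W)=\{0,d(u,v)\}=r_m(v|W)$, so no two-element set can m-resolve. Part 3 is then an immediate corollary of parts 1 and 2.

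The real work is part 4, and this is where I expect the main obstacle. The plan is to exploit that when $diam(G)\le 2$ every entry of $r_m(v|W)$ lies in $\{0,1,2\}$. For a landmark $w_i\in W$, the representation is determined by $d_i:=|N(w_i)\cap W|$ (one 0, then $d_i$ ones and $k-1-d_i$ twos), and for $v\notin W$ it is determined by $|N(v)\cap W|$. Vertices in and out of $W$ are always separated by the 0-entry, so the m-resolution condition on $W$ itself reduces to asking that the map $w_i\mapsto d_i$ be injective on $W$. Since $d_i\in\{0,1,\ldots,k-1\}$ and there are $k$ landmarks, injectivity forces $\{d_1,\ldots,d_k\}=\{0,1,\ldots,k-1\}$. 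But then one landmark has no neighbor in $W$ while another is adjacent to every other landmark, a direct contradiction as soon as $k\ge 2$. The case $k=1$ is excluded by part 1 because $G$ is not a path, so no finite $W$ works and $md(G)=\infty$. The delicate part is making sure the separation between in-$W$ and out-of-$W$ representations really is automatic and not swept under the rug, which I would verify by pointing to the 0 entry.
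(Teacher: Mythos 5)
Your proposal is correct: all four arguments are sound, including the counting argument for part 4 (injectivity of $w_i\mapsto|N(w_i)\cap W|$ forcing both a landmark with no neighbour in $W$ and one adjacent to all others, which is contradictory for $k\ge 2$). Note, however, that the paper itself does not prove this lemma --- it is imported from \cite{SSV} as a known result --- so there is no in-paper proof to compare against; your arguments are the standard ones and would serve as a self-contained justification.
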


Another basic property needs the definition of twin vertices as follow. Two vertices $u$ and $v$ are said to be \emph{twins} if $N(u)\setminus\{v\}=N(v)\setminus\{u\}$.
We define a relation $\sim$ where $u \sim v$ if and only if $u=v$ or $u$ and $v$ are twins. It is quite obvious that $\sim$ is an equivalence relation on $V(G)$ and we denote by $v^*$ the equivalence class containing the vertex $v$. The fact that a pair of twins have the same distance to every other vertex gives rise to a necessary condition for graphs having finite multiset dimension.

\begin{lemma} \label{twins} \cite{SSV,KI18}
If $G$ contains a vertex $v$ with $|v^*|\geq 3$, then $md(G) = \infty$.
\end{lemma}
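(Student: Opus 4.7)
The plan is to derive a contradiction by producing two distinct vertices in $v^*$ whose representation multisets coincide, no matter which set $W$ we attempt to use.

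First I would establish the key distance property of twins: if $u,v$ are twins (adjacent or not) and $w\notin\{u,v\}$, then $d(u,w)=d(v,w)$. For adjacent twins, any shortest $u$--$w$ path can be rerouted to a shortest $v$--$w$ path through the common neighborhood $N(u)\setminus\{v\}=N(v)\setminus\{u\}$, and symmetrically. For non-adjacent twins, $N(u)=N(v)$, so $d(u,w)=1+\min_{x\in N(u)}d(x,w)=d(v,w)$. I would also note that $d(u,v)\in\{1,2\}$ is the same for every pair of vertices in a single twin class $v^*$.

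Next, suppose for contradiction that $W\subseteq V(G)$ is an m-resolving set and pick three distinct $u_1,u_2,u_3\in v^*$. Since each $u_i$ is either in $W$ or not, by pigeonhole at least two of them lie on the same side of $W$, and this is the pair I would compare. The reason I need same-side pairs is that if one of the chosen twins is in $W$ and the other is not, the former contributes a $0$ to its representation multiset and the latter contributes $d(u_i,u_j)\in\{1,2\}$, which breaks the symmetry and ruins the direct comparison.

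I would then split into two cases. Case (a): two of the $u_i$'s, say $u_2,u_3$, lie outside $W$. For any $w\in W$, either $w=u_1$, in which case $d(u_2,w)=d(u_3,w)$ because both equal the common twin-distance in $v^*$, or $w\notin\{u_1,u_2,u_3\}$, in which case $d(u_2,w)=d(u_3,w)$ by the twin distance property. Hence $r_m(u_2|W)=r_m(u_3|W)$. Case (b): two of the $u_i$'s, say $u_1,u_2$, lie in $W$. Using the bijection $W\to W$ that swaps $u_1\leftrightarrow u_2$ and fixes everything else, every term $d(u_1,w)$ is matched with an equal term $d(u_2,\sigma(w))$: the $u_1\leftrightarrow u_2$ swap contributes $0\mapsto 0$ and $d(u_1,u_2)\mapsto d(u_2,u_1)$; if $u_3\in W$ then $d(u_1,u_3)=d(u_2,u_3)$ since these are twin-distances in $v^*$; and for any other $w\in W$ the twin distance property gives equality. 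Thus $r_m(u_1|W)=r_m(u_2|W)$, contradicting that $W$ is m-resolving. This case analysis is straightforward; the only place where care is needed is Case (b) when $u_3\in W$, to recognize that equal twin-distances within $v^*$ still cancel correctly.
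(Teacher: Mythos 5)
Your proof is correct. The paper does not prove this lemma itself (it is imported from \cite{SSV,KI18}), but the one idea it records --- that twins are equidistant from every third vertex --- is exactly the fact you establish and then combine with the pigeonhole observation that among three members of $v^*$ two must lie on the same side of $W$; both of your cases (two twins outside $W$, or two twins inside $W$ handled via the swap bijection) go through, so this matches the intended standard argument.
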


In general, the necessary conditions for the finiteness of multiset dimension in Lemmas \ref{diam2} and \ref{twins} are not sufficient, so it is interesting to find such conditions for trees, as proposed in \cite{SSV}.
\begin{problem} \cite{SSV} \label{chartree}
Characterize all trees having finite multiset dimension. Give exact values of the multiset dimension of trees if it is finite.
\end{problem}


It is obvious that if a graph $G$ on $n$ vertices has finite multiset dimension, then $md(G)\leq n$.
However it was proposed that the natural upper bound is not sharp as stated in the following conjecture.
\begin{conjecture} \cite{SSV} \label{n-1}
If $G$ is a graph on $n$ vertices having finite multiset dimension, then $md(G)\leq n-1$.
\end{conjecture}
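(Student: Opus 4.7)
The plan is, given a graph $G$ with $md(G)<\infty$, to exhibit a single vertex $v\in V(G)$ whose removal still yields an m-resolving set, i.e.\ to show that $V(G)\setminus\{v\}$ m-resolves $G$ for some choice of $v$. The argument naturally splits on the twin structure of $G$, which by Lemma~\ref{twins} is tightly constrained: every $\sim$-equivalence class has cardinality at most $2$.

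First I would handle the case in which $G$ contains a twin pair $\{u,v\}$, taking $v$ itself as the vertex to remove. A direct check shows that any set containing exactly one of a twin pair distinguishes the pair: if $u\in W$ and $v\notin W$ then $r_m(u|W)$ contains the entry $d(u,u)=0$ where $r_m(v|W)$ contributes $d(v,u)>0$, while all other entries agree by the twin property. The further task is to show that $W=V(G)\setminus\{v\}$ resolves every non-twin pair as well, for which I would exploit that $v$ contributes the same distance as $u$ to every vertex outside $\{u,v\}$, so removing $v$ cannot erase any distinction that genuinely depended on it.

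If $G$ has no twin pair, I would first try to show that $V(G)$ itself is m-resolving: a collision $r_m(u|V)=r_m(v|V)$ reduces to $u$ and $v$ having matching multiset distance profiles on $V\setminus\{u,v\}$, a strong symmetry that, absent twins, one would hope to rule out by a metric argument (for example by comparing distances to a vertex of minimum eccentricity). Granting this, I would then remove an end-vertex $v$ of maximum eccentricity and argue that a collision after removing such an extremal vertex would force two non-twin vertices to have distance multisets differing only by a single swapped entry, contradicting the twin-free hypothesis.

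The hard part is undoubtedly the twin-free case. For trees the acyclic structure lets the authors push all the way down to the stronger bound $md(T)\le n-2$, but for general graphs, cycles can produce multiset coincidences between non-twin vertices with no tree analogue, and I do not see a purely local way to rule these out for at least one choice of removable vertex. A more global invariant---possibly a counting argument over the swap pairs produced by the failure of each $V(G)\setminus\{v\}$ to be m-resolving---seems necessary, and this is presumably why the statement has remained a conjecture despite the progress made on trees here.
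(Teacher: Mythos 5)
This statement is a \emph{conjecture} quoted from \cite{SSV}; the paper does not prove it for general graphs, and neither does your proposal --- you concede as much in your final paragraph when you say you ``do not see a purely local way'' to handle the twin-free case. What the paper actually establishes is only the tree case (Theorem~\ref{bounds}, in fact the stronger bound $n-2$), so the honest verdict is that your attempt leaves the conjecture open, with two concrete gaps beyond the one you acknowledge.

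First, a structural subtlety: supersets and subsets of m-resolving sets need not be m-resolving, so the only case that matters is $md(G)=n$, in which $V(G)$ \emph{is} m-resolving by definition (there is nothing to ``first try to show'' in the twin-free case), and the whole game is to find one vertex $v$ such that the distance $d(\cdot,v)$ can be \emph{reconstructed} from the multiset $r_m(\cdot\,|\,V(G)\setminus\{v\})$; only then does a collision in the smaller set lift to a collision in $V(G)$. The paper's tree argument does exactly this by deleting the center: $d(u,C(T))=\max\bigl(r_m(u|R')\bigr)-rad(T)$, so the deleted entry is a function of the surviving multiset. Second, and this is the step in your sketch that actually fails, your twin-pair case does not have this reconstructibility. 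If $u,v$ are twins and $W=V(G)\setminus\{v\}$, then for $x,y\notin\{u,v\}$ a collision $r_m(x|W)=r_m(y|W)$ forces only $r_m(x|V)=r_m(x|W)\uplus\{d(x,u)\}$ versus $r_m(y|W)\uplus\{d(y,u)\}$; the values $d(x,u)$ and $d(y,u)$ are each already \emph{elements} of the (equal) multisets $r_m(x|W)=r_m(y|W)$, but multiset equality does not identify which element came from $u$, so you cannot conclude $d(x,u)=d(y,u)$ and no contradiction with the resolving property of $V(G)$ follows. So even the case you treat as routine is unproved, and the twin-free case remains entirely open; if you want a target within reach, the paper's eccentricity/center technique is the model to imitate, but it leans on Lemma~\ref{ecc} for trees and has no obvious analogue for graphs with cycles.
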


In this paper, we show that Conjecture \ref{n-1} is true for trees in Section \ref{s_bounds}. Additionally, we provide necessary and sufficient conditions for caterpillars and lobsters having finite multiset dimension in Section \ref{s_catlob} which partially settled Open Problem \ref{chartree}.

To prove the aforementioned results, we shall utilise the following properties of trees, which follow from the fact that in a tree, there exists a unique path connecting every pair of distinct vertices.
\begin{lemma}
Let $T$ be a tree.
\begin{enumerate}
\item $rad(T)=\left\lceil \frac{diam(T)}{2} \right\rceil$.
\item If $T$ has even diameter then $|C(T)|=1$ and if $T$ has odd diameter then $C(T)=\{u,v\}$, where $u,v$ are two adjacent vertices.
\item For every $v\in V(T)$, $ecc(v)=rad(T)+d(v,C(T))$. \label{ecc}
\end{enumerate}
\end{lemma}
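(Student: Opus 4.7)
The plan is to derive all three properties simultaneously via the classical technique of iterated leaf pruning. Given a tree $T$ with at least three vertices, let $T'$ denote the tree obtained by deleting every leaf of $T$. The crucial observation is that in a tree the eccentricity of any non-leaf vertex is realized by a leaf: a longest path starting at an internal vertex must terminate at a leaf, for otherwise it could be extended by an edge. Consequently $ecc_T(v) = ecc_{T'}(v) + 1$ for every $v \in V(T')$, which yields $rad(T) = rad(T') + 1$, $diam(T) = diam(T') + 2$, and $C(T) = C(T')$.

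Iterating the pruning, after some $k$ steps the process terminates at either $K_1$ or $K_2$. For $K_1$ we have $rad = 0$, $diam = 0$, $|C| = 1$; for $K_2$ we have $rad = 1$, $diam = 1$, and $|C| = 2$ with the two centers adjacent. Unwinding the chain of prunings, we obtain $rad(T) = \lceil diam(T)/2 \rceil$, together with $|C(T)| = 1$ when $diam(T)$ is even and $|C(T)| = 2$ with the two centers adjacent when $diam(T)$ is odd. This settles parts 1 and 2.

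For part 3, let $c$ be a center closest to $v$, which is well defined by part 2. The upper bound $ecc(v) \leq rad(T) + d(v,c)$ is immediate from the triangle inequality applied through $c$: any $u \in V(T)$ satisfies $d(v,u) \leq d(v,c) + d(c,u) \leq d(v,c) + rad(T)$. For the reverse inequality, assume $v \neq c$, root $T$ at $c$, and let $c'$ be the child of $c$ on the unique path from $c$ to $v$. I claim there is a vertex $u$ with $d(c,u) = rad(T)$ lying outside the subtree rooted at $c'$. Otherwise every vertex $w$ outside that subtree satisfies $d(c,w) \leq rad(T) - 1$, so $d(c',w) = d(c,w) + 1 \leq rad(T)$; combined with $d(c',w) \leq rad(T) - 1$ for $w$ inside the subtree of $c'$, this gives $ecc(c') \leq rad(T)$, making $c'$ a center strictly closer to $v$ than $c$, contradicting our choice. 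For such a $u$, the unique $v$-$u$ path passes through $c$, hence $d(v,u) = d(v,c) + rad(T)$, giving the matching lower bound.

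The routine parts are the pruning step and the triangle inequality; the main obstacle is the lower bound argument in part 3, where one must extract an eccentricity-realizing vertex on the correct side of $c$ and verify that the failure of this would contradict the choice of $c$ as the nearest center.
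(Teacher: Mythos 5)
Your proof is correct. Note that the paper does not actually prove this lemma: it states all three parts as standard facts, remarking only that they ``follow from the fact that in a tree, there exists a unique path connecting every pair of distinct vertices.'' So you have supplied a genuine argument where the paper supplies none. Your route --- iterated leaf pruning for parts 1 and 2, and a rooted-tree argument locating an eccentricity-realizing vertex on the far side of the nearest center for part 3 --- is the classical one (essentially Jordan's theorem on tree centers), and the key step in part 3, showing that if no vertex at distance $rad(T)$ from $c$ lies outside the subtree of $c'$ then $c'$ would itself be a center closer to $v$, is exactly the right contradiction. Two routine verifications are left implicit and deserve a sentence each if you write this up: (i) the conclusions $rad(T)=rad(T')+1$ and $C(T)=C(T')$ do not follow from $ecc_T(v)=ecc_{T'}(v)+1$ on $V(T')$ alone; you must also check that no leaf of $T$ is a center, i.e.\ that a leaf's eccentricity strictly exceeds its neighbor's when $|V(T)|\ge 3$; and (ii) the inequality $ecc_T(v)\ge ecc_{T'}(v)+1$ (and likewise $diam(T)= diam(T')+2$) uses the fact that every leaf of $T'$ has a pendant leaf of $T$ attached to it, which holds because a leaf of $T'$ lost at least one neighbor in the pruning. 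Both are one-line checks, so these are omissions of bookkeeping rather than gaps in the idea.
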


From now on, we shall denote by $P_n$ the path on $n$ vertices and $S_n$ the star on $n$ vertices.

\section{Upper bound for multiset dimensions of trees} \label{s_bounds}

In this section, we provide partial proof for Conjecture \ref{n-1}, that is the conjecture is true for trees. We start with trees of small diameter.

\begin{lemma} \label{diam3}
If $T$ is a tree of order $n$ and diameter 3 then $md(T)\leq n-2$.
\end{lemma}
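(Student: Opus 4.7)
The approach is to exploit the very restrictive structure of trees of diameter $3$ together with Lemma~\ref{twins}. First I would observe that every such tree is a \emph{double star}: it has two adjacent internal vertices $u,v$, with $u$ having $a\ge 1$ leaf neighbours and $v$ having $b\ge 1$ leaf neighbours, so that $n=a+b+2$. Indeed, every diametral path has length $3$ and its two internal vertices must accommodate all non-leaf vertices of $T$, otherwise the diameter would exceed $3$.

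Next I would apply Lemma~\ref{twins}: the leaves attached to $u$ are pairwise twins, so $a\ge 3$ would force $md(T)=\infty$, contradicting the implicit hypothesis that $md(T)$ is finite; the same argument gives $b\le 2$. Only three configurations therefore remain, up to swapping $u$ and $v$: $(a,b)=(1,1)$, $(1,2)$, and $(2,2)$.

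Each case is then dispatched by exhibiting an m-resolving set of size at most $n-2$. The case $(a,b)=(1,1)$ is just $T=P_4$, for which Lemma~\ref{path} already gives $md(T)=1\le n-2$. In the other two cases, letting $u_1$ be a leaf at $u$ and $v_1,v_2$ the leaves at $v$, the candidate $W=\{u_1,v_1,v_2\}$ should m-resolve $T$. A short verification, with all distances in $\{0,1,2,3\}$, yields (for the $(1,2)$ case) representations $\{0,3,3\},\{1,2,2\},\{2,1,1\},\{3,0,2\},\{3,2,0\}$ for $u_1,u,v,v_1,v_2$ respectively, which are pairwise distinct; the $(2,2)$ case differs only by an extra vertex $u_2$ with representation $\{2,3,3\}$, still distinct from all the others. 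So $|W|=3\le n-2$ works in every case.

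The real content of the argument is the structural reduction to double stars and the twin-class bound $a,b\le 2$; the ensuing case analysis is routine because the diameter, and hence the range of distances, is so small. I do not anticipate any genuine obstacle here—the lemma is really a base case for an inductive treatment of larger diameters later in the section.
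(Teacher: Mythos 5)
Your structural reduction (every diameter-$3$ tree is a double star, and Lemma~\ref{twins} forces $a,b\le 2$ when $md(T)<\infty$) is correct and is essentially the paper's opening step, phrased via leaf-counts instead of center degrees. The case $(1,1)=P_4$ is also fine. However, there is a genuine error in the remaining two cases: the set $W=\{u_1,v_1,v_2\}$ is \emph{not} an m-resolving set, because $v_1$ and $v_2$ are twins and you have placed both of them in $W$. You record their representations as $\{3,0,2\}$ and $\{3,2,0\}$ and call them distinct, but representations in this problem are multisets, not ordered tuples, and $\{3,0,2\}=\{3,2,0\}$ as multisets. More generally, a pair of twins $v_1,v_2$ receives identical multiset representations whenever both are in $W$ or both are outside $W$; exactly one of the two must be selected, so that the $0$ in one representation is replaced by $d(v_1,v_2)=2$ in the other. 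Treating the representations as vectors is precisely the distinction between metric dimension and multiset dimension that this paper is about.

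The gap is repairable with the paper's choice $R=\{u_1,u,v_1\}$ (one leaf on the $u$-side, the center $u$ itself, and exactly one of the twin leaves on the $v$-side). In the $(1,2)$ case this gives representations $\{0,1,3\}$, $\{0,1,2\}$, $\{1,1,2\}$, $\{0,2,3\}$, $\{2,2,3\}$ for $u_1,u,v,v_1,v_2$ respectively, which are pairwise distinct as multisets, and the $(2,2)$ case adds $u_2$ with representation $\{1,2,3\}$, still distinct from the others. With that substitution your argument goes through and coincides with the paper's proof; as written, the exhibited sets fail.
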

\begin{proof}
Let $u$ and $v$ be the centers of $T$, by Lemma \ref{twins}, $deg(u)\leq 3$ and $deg(v)\leq 3$. Since $diam(T)=3$, the centers are not leaves, and so $2 \leq deg(u), deg(v) \leq 3$.

We shall consider 3 cases:
\begin{description}
  \item[\textbf{(1) $deg(u)=deg(v)=2$}:] Thus $T\approx P_4$ and $md(T)=1=n-3$.
  \item[\textbf{(2) $deg(u)=2$ and $deg(v)=3$:}] Let $N(u)=\{v,u_1\}$ and $N(v)=\{u,v_1,v_2\}$. Thus we have $R=\{u_1,u,v_1\}$ is an m-resolving set for $T$, which means $md(T)=3=n-2$.
  \item[\textbf{(3) $deg(u)=deg(v)=3$:}] Let $N(u)=\{v,u_1,u_2\}$ and $N(v)=\{u,v_1,v_2\}$. Therefore $R=\{u_1,u,v_1\}$ is an m-resolving set for $T$, or $md(T)=3=n-3$.
\end{description}
\end{proof}

\begin{theorem}\label{bounds}
Let $T$ be a tree of order $n$ and diameter at least $2$. If $md(T) < \infty$, then $md(T) \leq n-2$.
\end{theorem}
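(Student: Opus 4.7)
The plan is to split the proof by diameter. For $diam(T)=2$, the tree must be a star $K_{1,n-1}$; since $md(T)<\infty$ forces at most two twin leaves (Lemma \ref{twins}), we must have $n\le 3$, so $T=P_3$, giving $md(P_3)=1=n-2$. The diameter-$3$ case is already Lemma \ref{diam3}. Thus the main work is for $diam(T)\ge 4$. If $T$ is a path, then $md(T)=1\le n-2$ immediately (as $n\ge 5$), so we may assume $T$ is a non-path tree of diameter at least $4$.

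For such $T$, my plan is to exhibit an m-resolving set $W\subseteq V(T)$ of size at most $n-2$ of the form $W=V(T)\setminus\{a,b\}$ for a carefully chosen pair of distinct vertices. Fix a diametrical path $P:v_0v_1\ldots v_d$ in $T$ with $d\ge 4$. To verify that a given $W$ is m-resolving, I would exploit two tree-metric identities: for any off-path vertex $v$ whose nearest vertex on $P$ is $v_i$ at distance $t=d(v,v_i)$, we have $d(v,v_j)=|i-j|+t$ for every $0\le j\le d$, so $r_m(v|V(P))$ is a translate of $r_m(v_i|V(P))$ by $t$. Combined with Lemma \ref{ecc}, these identities reduce the comparison of two multiset representations to comparing branch-data (branch point and distance-to-path) against the small discrepancy coming from the removed pair $\{a,b\}$.

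The main obstacle is that no single universal choice of $\{a,b\}$ works. For instance, the natural candidate $\{v_0,v_d\}$ (the two diameter-endpoint leaves) can fail: in the tree obtained from $P_5$ by attaching a pendant leaf at the central vertex, the set $V(T)\setminus\{v_0,v_4\}$ leaves three vertices with identical multiset representation. I therefore expect to proceed by case analysis on the local structure of $T$ at the centers and along $P$---for example on whether $v_1$ and $v_{d-1}$ have additional off-path neighbors, on the parity of $d$, and on the degree profile of the central vertices---and in each case to choose $\{a,b\}$ so as to break the offending symmetries (often by taking $\{a,b\}$ to be two adjacent interior vertices of $P$ rather than the two endpoints). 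The computational heart of the proof will then be a direct, case-by-case verification that the multiset representations of all pairs of vertices in $V(T)$ remain distinct under the chosen $W$, using the tree-metric identities above.
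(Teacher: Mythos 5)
There is a genuine gap in your main case ($diam(T)\ge 4$): the plan rests on the claim that for every non-path tree of diameter at least $4$ with finite multiset dimension one can choose two vertices $a,b$ so that $W=V(T)\setminus\{a,b\}$ is m-resolving, and this claim is false. The point is that $md(T)\le n-2$ does \emph{not} mean there is an m-resolving set of size exactly $n-2$: unlike ordinary resolving sets, m-resolving sets are not closed under taking supersets, so a graph can have a small m-resolving set while no co-$2$ set resolves. A concrete counterexample to your intended construction is the spider obtained from $K_{1,4}$ by subdividing every edge once (center $c$, legs $c x_i y_i$, $i=1,\dots,4$; $n=9$, diameter $4$). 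Its multiset dimension is finite --- for instance $\{x_1,y_1,y_2,x_3,c\}$ is m-resolving, as one checks directly --- yet for any two removed vertices at least two legs $\{x_i,y_i\}$, $\{x_j,y_j\}$ remain entirely inside $W$, and the automorphism swapping these two legs and fixing everything else maps $W$ onto itself, forcing $r_m(x_i|W)=r_m(x_j|W)$. So no set of size $n-2$ (nor $n-1$, nor $n$) is m-resolving for this tree, and no case analysis on the structure along a diametrical path can rescue a construction of the form $V(T)\setminus\{a,b\}$. A related warning sign is that after reducing to $diam(T)\ge4$ your sketch never uses the hypothesis $md(T)<\infty$, which is indispensable (e.g.\ the analogous spider with five legs has diameter $4$ and infinite multiset dimension).

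For contrast, the paper never tries to build an $(n-2)$-set from scratch. It starts from an arbitrary m-resolving set $R$, whose existence is exactly the hypothesis $md(T)<\infty$; only the cases $R=V(T)$ and $R=V(T)\setminus\{x\}$ need attention, and in those cases $R$ is shrunk by deleting the center(s) of $T$ or of $T-x$, or the neighbors of a center, with parity arguments on $diam(T)$ and $|T|$ in the odd-diameter case. Crucially, the verification that the shrunk set $R'$ still resolves is \emph{relative}: one shows that two vertices with equal representations with respect to $R'$ would already have equal representations with respect to $R$, contradicting the assumed resolving property of $R$. That relative structure is what lets the proof avoid the false existence claim your approach depends on; if you want to keep a constructive flavor, you would have to allow removing more than two vertices and still anchor the argument on a given m-resolving set, which essentially reproduces the paper's strategy.
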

\begin{proof}
Let $T$ be a tree with finite multiset dimention. If $diam(T)=2$, by Lemma \ref{path} and \ref{diam2}, $T\approx P_3$ and the result follows.

Now let $diam(T)\geq 3$. First we prove $md(T)\leq n-1$. Let $R$ be arbitrary m-resolving set of $T$. Since if $R \ne V(T)$ we already have the desired result, consider $R=V(T)$. We claim that $R'=V(T)-C(T)$ is also an m-resolving set for $T$. Consider two cases based on the parity of the diameter of $T$.

\textbf{Case 1: $\mathbf{diam(T)}$ is even.} Recall that there exists a unique center. To the contrary, suppose that $R'$ is not an m-resolving set, then there exist two vertices $u$ and $v$ where $r_m(u|R')=r_m(v|R')$. The maximum element in $r_m(u|R')$ is $ecc_T(u)$, thus $ecc_T(u)=ecc_T(v)$. By (\ref{ecc}), $d(u,C(T))=ecc_T(u)-rad(T)=ecc_T(v)-rad(T)=d(v,C(T))$, and so $r_m(u|R)=r_m(u|R')\cup \{d(u,C(T))\} = r_m(v|R') \cup \{d(v,C(T))\} = r_m(v|R)$, which contradict the fact that $V(T)$ is a resolving set.
			
\textbf{Case 2: $\mathbf{diam(T)}$ is odd.} Let $C(T)=\{c_1,c_2\}$, where $c_1$ and $c_2$ are adjacent. Similar to the first case, it can be showed that $r_m(u|R)=r_m(u|R')\cup\{d(u,c_1),d(u,c_2)\}=r_m(u|R')\cup\{ecc_T(u)-rad(T),ecc_T(u)-rad(T)+1\}$, a contradiction.
			
Now we are ready to prove that $md(T)\leq n-2$. If $diam(T)=3$, the result follows from Lemma \ref{diam3}. Let $diam(T)\geq 4$ and let $R$ be an m-resolving set for $T$ with $|R| \leq n-1$. It is only necessary to consider the case when $R=V(T)-\{x\}$, since otherwise the desired result holds. Let $T'$ be the minimum induced subgraph of $T$ containing $R$, that is $T'=T-x$ if $x$ is a leaf and $T'=T$ otherwise. Note that $diam(T') \geq diam(T)-1 \geq 3$. We shall consider two cases, based on whether $x$ is a center of $T'$.
			
\textbf{Case 1: $\mathbf{x \notin C(T')}$.} We claim that $R'=R-C(T')=V(T)-\{x\}-C(T')$ is also an m-resolving set. By the construction of $T'$, the maximum element in $r_m(u|R')$ is $ecc_{T'}(u)$, for any $u\in V(T')$. Since $x\notin R'$ then $0\notin r_m(x|R')$, and $x\notin C(T')$ implies that $\max(r_m(x|R'))=rad(T')+d(x,C(T'))>rad(T')$. The only vertex $u$ other than $x$ with $0\notin r_m(u|R')$ is $u\in C(T')$, but if $u\in C(T')$ then $\max(r_m(x|R'))=rad(T')+d(x,C(T'))>rad(T')=ecc_{T'}(u)=\max(r_m(u|R'))$. Therefore $r_m(x|R') \ne r_m(u|R')$ for all $u\in V(T)-\{x\}$. Let $u,v$ be two vertices in $V(T)-\{x\}$. If $r_m(u|R')=r_m(v|R')$, then $r_m(u|R)=r_m(v|R)$, a contradiction. These show that the multiset representation of every vertex is distinct.

\textbf{Case 2 : $\mathbf{x\in C(T')}$.} Here $x$ is not a leaf and $T'=T$. Since $diam(T)\geq 4$, then neither a center nor a neighbor of a center is an end-vertex. We again separate our observation into two subcases, based on the diameter of $T$.
\begin{description}
\item[\textbf{Case 2.1:}] \textbf{$\mathbf{diam(T)}$ is even.} In this case $C(T)=\{x\}$. Let $N=N(x)$ be the set of neighbors of the center or the set of vertices with eccentricity $rad(T)+1$. Let $R'=R-N=V(T)-\{x\}-N$, we claim that $R'$ is also an m-resolving set. Since $diam(T)\geq 4$ the vertices in $N$ are not end-vertices, and so the maximum element in $r_m(u|R')$ is still $ecc_T(u)$. Since $x\notin R'$ and $x\in C(T)$, then $0\notin r_m(x|R')$ and $\max(r_m(x|R'))=ecc_T(x)=rad(T)$. If $u$ is a vertex other than $x$ with $0\notin r_m(u|R')$, then $u\in N$ and $\max(r_m(x|R')) = rad(T) \ne rad(T) + 1 = \max(r_m(u|R'))$. Therefore $r_m(x|R')\ne r_m(u|R')$ for all $u\in V(T)-\{x\}$.
			
Now let $k=|N|$ and consider a vertex $v\in V(T)-\{x\}$. Thus,
			\begin{align*}
			r_m(v|R) &=r_m(v|R') \cup \{d(v,y):y\in N\}\\
			        &=r_m(v|R') \cup \{d(v,x)-1,(d(v,x)+1)^{k-1}\}\\
			        &=r_m(v|R') \cup \{ecc_T(v)-rad(T)-1,(ecc_T(v)-rad(T)+1)^{k-1}\}.
			\end{align*}
Since the maximum element in $r_m(u|R')$ is $ecc_{T}(u)$, if $r_m(u|R')=r_m(v|R')$, we will obtain $r_m(u|R)=r_m(v|R)$, a contradiction.
\item[\textbf{Case 2.2:}] \textbf{$\mathbf{diam(T)}$ is odd.} Since $|C(T)|=2$, let $C(T)=\{x,y\}$.

First we consider the case when $|T|$ is odd. Obviously, $|T-C(T)|$ is also odd. Consider $R'=R-\{y\}=V(T)-\{x,y\}$ and $T_x$ and $T_y$ are the components of $T-xy$ containing $x$ and $y$, respectively. Let $u$ and $v$ be two vertices in $T$, with $r_m(u|R')=r_m(v|R')$. This means $ecc(u)=ecc(v)$. If both $u$ and $v$ is in either $T_x$ or $T_y$, then they have the same distance to $y$ and $r_m(u|R)=r_m(v|R)$, a contradiction. If $u$ in $T_x$ and $v$ in $T_y$, then $d(u,x)=d(u,C(T))=ecc(u)-rad(T)=ecc(v)-rad(T)=d(v,C(T))=d(v,y)$, and so $d(u,v)$ is odd. Now we count the number of vertices in $R'$ with odd and even distance to $u$, and denote them with $o$ and $\epsilon$, respectively. Since $o + \epsilon = |R'|$ is odd, then $o \ne \epsilon$. However $d(u,v)$ is odd, and so, by the uniqueness of path between two vertices, a vertex with odd distance to $u$ has even distance to $v$ and vice versa. This means the number of vertices in $R'$ with odd distance to $v$ is $\epsilon$ which is not equal to the number of vertices with odd distance to $u$, a contradiction.

Now we consider the case when $|T|$ is even. Our idea is to construct a new m-resolving set of odd cardinality by removing some neighbors of the center. By doing so, two vertices in $T_x$ (or $T_y$) will have the same distance to the removed vertices; while a vertex in $T_x$ and a vertex in $T_y$ will have different number of vertices in $R'$ with odd distance. As we already established in the previous cases, this will guarantee that the newly constructed set is m-resolving. Our construction depends on the degrees of $x$ and $y$. If either $deg(x)$ or $deg(y)$ is odd, then either $|N|=|N(x)-\{y\}|$ or $|N|=|N(y)-\{x\}|$ is even. Choose the $N$ with even cardinality. Since $|T|$ is even then $|R|$ is odd and $|R'|=|R-N|$ is also odd. If both $deg(x)$ and $deg(y)$ is even, let $N_1=N(x)-\{y\}$ and $N_2=N(y)-\{x\}$. Thus $R'=V(T)-N_1-N_2$ is the required new m-resolving set.
\end{description}
\end{proof}
	
To study whether the upper bound in Theorem \ref{bounds} is sharp, we conducted exhaustive search for multiset basis for all trees of order up to 10, which were generated by using McKay's geng software \cite{MP13}. We found that there are only two trees of order $n$ with multiset dimension $n-2$. They are the path on 3 vertices and the tree on 5 vertices constructed from the star on 4 vertices by subdividing exactly one of its edges. The complete statistics for trees of order $n$, $6 \leq n \leq 10$ can be seen in Table \ref{trees}.

\begin{table}[h]
\begin{center}
\begin{tabular}{c|c|c|c|c|c|c|c|c|c|c}
  \tiny{$n$} & \tiny{\# trees} & \tiny{$md=\infty$} & \tiny{$md=1$} & \tiny{$md=3$} & \tiny{$md=4$} & \tiny{$md=5$} & \tiny{$md=6$} & \tiny{$md=7$} & \tiny{$md=8$} & \tiny{$md=9$} \\
  \hline
  \tiny{6} & \tiny{6} & \tiny{2} & \tiny{1} & \tiny{3} & \tiny{0} & \tiny{0} & \tiny{0} & \tiny{-} & \tiny{-} & \tiny{-} \\
  \tiny{7} & \tiny{11} & \tiny{4} & \tiny{1} & \tiny{5} & \tiny{1} & \tiny{0} & \tiny{0} & \tiny{0} & \tiny{-} & \tiny{-} \\
  \tiny{8} & \tiny{23} & \tiny{9} & \tiny{1} & \tiny{11} & \tiny{2} & \tiny{0} & \tiny{0} & \tiny{0} & \tiny{0} & \tiny{-} \\
  \tiny{9} & \tiny{47} & \tiny{20} & \tiny{1} & \tiny{23} & \tiny{3} & \tiny{0} & \tiny{0} & \tiny{0} & \tiny{0} & \tiny{0} \\
  \tiny{10} & \tiny{106} & \tiny{48} & \tiny{1} & \tiny{53} & \tiny{2} & \tiny{2} & \tiny{0} & \tiny{0} & \tiny{0} & \tiny{0} \\
\end{tabular}
\caption{Multiset dimensions of trees of order up to 10.} \label{trees}
\end{center}
\end{table}

Based on the result of our computer search, we would like to propose the following.
\begin{conjecture}\label{upperbound}
Let $T$ be a tree. If $md(T) < \infty$, then $md(T) \leq n-diam(T)+1$ and the bound is sharp.
\end{conjecture}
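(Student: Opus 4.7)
The plan is to split the proof into two halves: the upper bound and the sharpness statement.

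For the upper bound, I would construct an m-resolving set of size at most $n - diam(T) + 1$ directly. Write $d = diam(T)$. The cases $d = 2$ (where $T \approx P_3$ by Lemmas \ref{path} and \ref{diam2}, so $md(T) = 1 \leq 2$) and $d = 3$ (where Lemma \ref{diam3} already gives $md(T) \leq n - 2 = n - d + 1$) are in hand, so the content lies in $d \geq 4$. For such a $T$ I would fix a diametral path $v_0 v_1 \ldots v_d$ and attempt to show that the set $R := V(T) \setminus \{v_1, \ldots, v_{d-1}\}$, of size exactly $n - d + 1$, is m-resolving after at most local modifications. The intuition parallels the proof of Theorem \ref{bounds}: the endpoints $v_0$ and $v_d$ act as \emph{anchors} that force the maximum entry of $r_m(u|R)$ to equal $ecc_T(u)$, so by property (\ref{ecc}) the layer $d(u, C(T))$ of every vertex is read off from its representation, and only within-layer collisions remain to rule out.

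Within-layer collisions should come from two controlled sources, each resolvable by a single swap that preserves $|R|$. The first is a branch vertex that accidentally shares its distance multiset with some excluded spinal $v_i$; here one reinserts $v_i$ and excises the offending branch vertex instead. The second is a pair of sibling leaves that become twin-like once the spine is removed; by Lemma \ref{twins} such pairs have size at most two, so substituting one of the twins for the corresponding $v_i$ breaks the symmetry without increasing $|R|$. Carrying out these swaps consistently across all branches is where the main combinatorial work lies, and a secondary induction on the number of non-spinal branches, or a greedy sweep from the deepest branch outward, seems the natural device.

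The principal obstacle, and likely the reason this strengthening has not yet been obtained, will be trees whose interior branches themselves contain long paths, so that swaps intended to resolve one branch interfere with another. For these I would formalize a notion of \emph{secondary arms} hanging off the spine and argue by a global accounting that at most one vertex per arm conflict needs to be reinstated, preserving $|R| = n - d + 1$ throughout. Producing a single uniform statement covering both the easy single-branch case and heavily branched trees is the part I expect to demand the most care.

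For sharpness, I would exhibit an explicit infinite family. The $5$-vertex subdivided star (extremal for $d = 3$) and the $7$-vertex tree obtained from $P_5$ by attaching two extra leaves to its central vertex (extremal for $d = 4$) suggest the following candidates: for each odd $d \geq 3$, take $P_{d+1}$ with one extra leaf attached to a central vertex; for each even $d \geq 4$, take $P_{d+1}$ with two extra leaves attached to its unique center. In both cases the tree has order $d + 2$ or $d + 3$ respectively, and $md$ should equal $n - d + 1$; a matching lower bound would follow from a direct enumeration of small-size candidate m-resolving sets, exploiting the presence of twin leaves when $d$ is even and the rigid ruler structure imposed by the long diametral path otherwise.
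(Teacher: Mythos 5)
First, a point of order: the paper does not prove this statement. It is posed as Conjecture \ref{upperbound}, supported only by an exhaustive search over trees of order at most $10$ (Table \ref{trees}) and by the single family obtained from $S_4$ by subdividing one edge, which attains $md=3=n-diam(T)+1$. So there is no proof of record to compare yours against; a complete argument here would be new mathematics, and your text is explicitly a plan rather than a proof --- the passages ``where the main combinatorial work lies'' and ``the part I expect to demand the most care'' defer exactly the steps that make this a conjecture rather than a theorem.

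Beyond being incomplete, the upper-bound plan has a concrete gap. Take $T$ to be the balanced spider with three legs of length $k\geq 2$ attached to a center $c$, so $n=3k+1$ and $d=2k$; for $k=2$ this tree has finite multiset dimension by Theorem \ref{Lobster}, so the conjecture's hypothesis is satisfied. Your base set $R=V(T)\setminus\{v_1,\dots,v_{d-1}\}$ consists of the two tips of the chosen diametral path together with the entire third leg $w_1,\dots,w_k$. For each $1\leq i\leq k-1$ the excluded spinal vertices $v_i$ and $v_{d-i}$ then receive identical multisets: both see $\{i,\,2k-i\}$ from the two tips and the same distances $k-i+1,\dots,2k-i$ to the third leg (for $k=2$, both $v_1$ and $v_3$ get $\{1,2,3,3\}$). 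This is a within-layer collision between two spinal vertices, a type absent from your two-case taxonomy (branch-versus-spine and twin leaves), and it occurs $k-1$ times simultaneously. Repairing it requires injecting asymmetry between the two legs of the diametral path, which the ``one swap per conflict'' mechanism does not obviously achieve at constant cost; whether cardinality $n-d+1$ survives such a global repair is precisely the open content of the conjecture. Your sharpness families, by contrast, look sound --- I verified that $P_5$ with two leaves attached to its center has $md=4=n-d+1$, consistent with the paper's count of exactly one such tree on $7$ vertices --- but they realize the bound only for $n-d\in\{2,3\}$, whereas the paper's subdivided-star family realizes it for $d=n-2$; in any case sharpness contributes nothing to the upper bound, which is where the difficulty lies.
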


If Conjecture \ref{upperbound} is true, an example for the sharpness of the bound is the tree on $n$ vertices constructed from the star on 4 vertices by subdividing exactly one of its edges $n-4$ times. This tree has multiset dimension 3 which is equal to $n-(n-2)+1$, where $n-2$ is the diameter of the tree.


\section{Caterpillars and lobsters with finite multiset dimensions} \label{s_catlob}

In this paper we define caterpillars and lobsters by using the notion of a $k$-center-path. Let $G$ be a connected graph, a subgraph $P$ of $G$ is called a \emph{$k$-center-path of $G$} if $P$ is a path and $d(u,P)\leq k$ for every vertex $u$ in $G$. A \emph{minimum $k$-center-path} is a $k$-center path with minimum length. A \emph{caterpillar} is a tree containing a $1$-center-path and a \emph{lobster} is a tree containing a $2$-center-path. Let $T$ be a rooted tree with $v$ as its root, a \emph{separation of $T$}, denoted by $[T]$, is a graph obtained by subdividing all edge attached to $v$ and then deleting $v$. Note that the number of components in $[H]$ is the degree of $v$ in $H$.

We start by characterising all lobsters having finite multiset dimension.
\begin{theorem}\label{Lobster}
Let $G$ be a lobster. If $P$ is the minimum $2$-center path of $G$, then the following are equivalent.
\begin{itemize}
\item[(1)] $G$ has finite multiset dimension.
\item[(2)] The only component of $G-E(P)$ with infinite multiset dimension is an $S_4$.
\item[(3)] If $H$ is a component of $G-E(P)$ then $[H]$ has at most $4$ components which are either a $P_2$, a $P_3$, or an $S_4$, with at most two $P_2$s and two $S_4$s.
\end{itemize}
\end{theorem}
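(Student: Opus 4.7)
The plan is to establish the cyclic implications $(1) \Rightarrow (3) \Rightarrow (2) \Rightarrow (1)$ through a structural dictionary: because $P$ is a $2$-center-path, every component $H$ of $G-E(P)$ is a tree rooted at some $r \in V(P)$ of depth at most $2$. In the separation $[H]$, each child of $r$ contributes one component whose shape encodes its number of leaf-grandchildren --- zero yields a $P_2$, one yields a $P_3$, two yields an $S_4$, and $k \geq 3$ yields the star $S_{k+2}$. This translates structural conditions on $G$ directly into component-type constraints on each $[H]$.

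For $(1) \Rightarrow (3)$, I would invoke Lemma~\ref{twins}: leaves of $G$ sharing a common neighbor are pairwise twins, so no vertex of $G$ can have three leaf neighbors. This immediately bounds the $P_2$-components of $[H]$ by two and excludes every $S_n$-component with $n \geq 5$. The remaining bounds --- at most two $S_4$-components and at most four components in total --- follow from symmetry arguments. Three isomorphic $S_4$-branches at $r$ would, by pigeonhole on how any candidate $W$ intersects each branch (noting that twin leaves must be split by $W$), force two branches to contribute identical local multisets and produce a collision. Similarly, each leaf-grandchild of a $P_3$- or $S_4$-branch carries a short label recording which of the in-branch vertices lie in $W$; with more than four branches of these types, two leaf-grandchildren must share a label and thus a multiset representation. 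Conversely, for $(3) \Rightarrow (2)$ I would run a case analysis on the triple $(a, b_1, b_2)$ of counts of $P_2 / P_3 / S_4$ components of $[H]$ satisfying $a \leq 2$, $b_2 \leq 2$, $a + b_1 + b_2 \leq 4$; in every case except $(a, b_1, b_2) = (0, 0, 1)$ the label technique yields an explicit m-resolving set of $H$, while $(0, 0, 1)$ is precisely $H \cong S_4$.

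The hardest direction is $(2) \Rightarrow (1)$. The plan is to glue local m-resolving sets into a global one: take $W_H$ to be an m-resolving set for each finite-md component $H$, and for each $S_4$-component take $W_H$ to consist of a single leaf-grandchild (which resolves all of $H$ in $G$ except the twin pair, to be handled externally). Set $W = \bigcup_H W_H$, augmented by a few carefully chosen vertices of $P$. The main obstacle is cross-component collisions: for $u \in H$ and $u' \in H'$, the distances from $u$ to $W_{H'}$ all factor through the roots, so $H$ and $H'$ differ only through offsets along $P$ and through the cardinalities and parities of the local sets. I would control this by assigning each $W_H$ a prescribed size/parity profile, in the spirit of the parity argument used in the proof of Theorem~\ref{bounds}, and by choosing enough path vertices to encode each vertex's root uniquely in its multiset representation. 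Handling the $S_4$-component case, in which $W_H$ does not internally resolve its twin pair of leaves, is the most delicate step: one must verify that the path contributions provide exactly the offset needed to complete the resolution.
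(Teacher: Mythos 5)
Your structural dictionary (children of the root of $H$ correspond to components of $[H]$ of shape $P_2$, $P_3$, $S_4$ or a forbidden larger star) and your pigeonhole-by-labels argument for bounding the components of $[H]$ are essentially the paper's own argument for the two ``easy'' arrows: the paper classifies components of $[H]$ into four types according to how an m-resolving set meets them, and uses the fact that two children of the root at equal distance from the root see every vertex outside $H$ identically, so a local collision lifts to a global one. Your $(1)\Rightarrow(3)$ and $(3)\Rightarrow(2)$ are therefore sound in outline (though $(3)\Rightarrow(2)$ does require you to actually exhibit an m-resolving set for each of the finitely many admissible shapes of $H$, which you only assert).

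The genuine gap is in $(2)\Rightarrow(1)$, which is where almost all of the work lies, and your sketch does not survive contact with the even-diameter case. Your plan to control cross-component collisions by ``size/parity profiles, in the spirit of the parity argument of Theorem~\ref{bounds}'' works only when two colliding vertices $u,v$ are at \emph{odd} distance from each other: then vertices at odd distance from $u$ are at even distance from $v$, and an odd total $|W|$ forces the odd-distance counts to differ. For a lobster of even diameter, two vertices of equal eccentricity on opposite sides of the central vertex are at \emph{even} distance, so the parity argument says nothing, and no choice of $|W|$ rescues it. The paper needs a different mechanism here: it compares the lexicographic vectors $(a_0,a_1,a_2)$ and $(b_0,b_1,b_2)$ counting chosen vertices of top three eccentricities on each side, designates a dominant side, adds two specific path vertices ($p_0$ and $p_{n/2+1}$, or their mirror images) to break the tie, and still must handle a residual case ($u\in H_{n/2-1}$, $v\notin H_{n/2+1}$) by a separate multiplicity count at distance $2$. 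Nothing in your proposal anticipates this, and ``choosing enough path vertices to encode each vertex's root uniquely'' is not achievable in a multiset representation (many roots along $P$ are interchangeable from the point of view of a distant landmark); the paper instead only separates vertices of equal eccentricity, which is a much weaker and attainable goal. Until you supply a concrete construction for the even-diameter case, the implication $(2)\Rightarrow(1)$ is unproven.
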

\begin{proof}
We will prove $(2) \Rightarrow (3) \Rightarrow (1) \Rightarrow (2)$.
\begin{itemize}
\item[\textbf{$(2)\Rightarrow (3)$} ] Let $H$ be a component of $G-E(P)$ and $v$ the root of $H$ in $P$. If $H$ has infinite multiset dimension then $H$ is an $S_4$ which fullfils (3). Now consider that $H$ has finite multiset dimension.
			
Let $u$ be a neighbor of $v$ in $H$. Since $G$ is a lobster then all neighbors of $u$ other than $v$ are leaves, and by Lemma \ref{twins}, $deg(u)\leq 3$. This means the component in $[H]$ containing $u$ is either a $P_2$, a $P_3$, or an $S_4$.
			
Let $R$ be an m-resolving set of $H$, if $[H]$ contains an $S_4$ then exactly one of the leaf of $S_4$ is in $R$. If there are more than two $S_4$s in $[H]$, then there exist two $S_4$s with either both centers are in $R$ or not in $R$, which results on both centers having the same multiset representations with respect to $R$, a contradiction. The fact that $[H]$ can not have more than two $P_2$s is due to Lemma \ref{twins}.
			
Thus we are left to prove that $[H]$ has at most $4$ components. Since for each component of $[H]$, there are at most one vertex in $R$ with distance two from a vertex $v$ in $H$, we shall catagorize each component of $[H]$ into the following 4 types:
\begin{itemize}
\item \emph{type 0}: The component has no vertex in $R$,
\item \emph{type 1}: The component has one vertex in $R$ with distance 1 to $v$,
\item \emph{type 2}: The component has one vertex in $R$ with distance 2 to $v$, or
\item \emph{type 12}: The component has two vertices in $R$, each with distance 1 or 2 to $v$.
\end{itemize}
			
Suppose that $[H]$ has more than $4$ components. Thus there are two components with the same type, and so the two neighbors of $v$ in those two components will have the same multiset representation with respect to $R$, a contradiction.
			
\item[\textbf{$(3)\Rightarrow (1)$} ] Let $G$ be a graph that satisfies (3), $P=p_0,p_1,\ldots,p_n$, $H_i$ be the component of $G-E(P)$ containing $p_i$, and $d_i$ be the degree of $p_i$ in $H_i$. For $i=0,1,\ldots, n$, let $R(H_i)$ be a the resolving set of $H_i$ with maximum cardinality which does not contain the root.
			
We shall construct an m-resolving set for $G$. If $d_i=0$, define $R(H_i)=\emptyset$, otherwise do the folowing steps.
\begin{enumerate}
\item Define $H_i^{(1)},H_i^{(2)},\ldots, H_i^{(d_i)}$ a non-increasing sequence of components of $H_i$ with component ordering: $S_4 > P_3 > P_2$.
\item If $H_i^{(1)} \not\approx P_2$, choose exactly two vertices in $H_i^{(1)}$, each with distance $1$ and $2$ to $p_i$, as members of $R(H_i)$. If $H_i^{(1)} \approx P_2$, define $H_i^{(d_i+1)}\approx P_2$.
\item If $H_i^{(2)}$ exists and is not a $P_2$, choose exactly one vertex in $H_i^{(2)}$ with distance $2$ to $p_i$ as a member of $R(H_i)$. If $H_i^{(2)} \approx P_2$, define $H_i^{(d_i+e_i)}\approx P_2$ with $e_i=1$ if $H_i^{(1)} \not\approx P_2$ and $e_i=2$ otherwise.
\item If $H_i^{(3)}$ exists, choose the vertex in $H_i^{(3)}$ with distance $1$ to $p_i$ as a member of $R(H_i)$.
\end{enumerate}
Since $P$ is a minimum path and $R(H_i)$ a maximum resolving set, for $i=0$ and $i=n$, there exists a vertex in $R(H_i)$ with distance two to $P$. This means the maximum element in $r_m\left(v|\cup_{i=0}^n R(H_i)\right)$ is $ecc(v)$.	

Now we shall propose two constructions of an m-resolving set for $G$, depending on the diameter of $G$.
\begin{description}
\item[\textbf{Construction 1}: for odd $diam(G)$. ] If $\sum_{i=0}^{n} |R(H_i)|$ is odd, define $R=\cup_{i=0}^n R(H_i)$, otherwise  $R=p_0\cup\left(\cup_{i=0}^n R(H_i)\right)$. To prove that $R$ is a resolving set, we only need to show that vertices with the same eccentricity have different representations. In Figure \ref{diamodd}, the boxed vertices have the same eccentricity. Here we define sides as components of $G-c_1c_2$, where $c_1$ and $c_2$ are the centers.

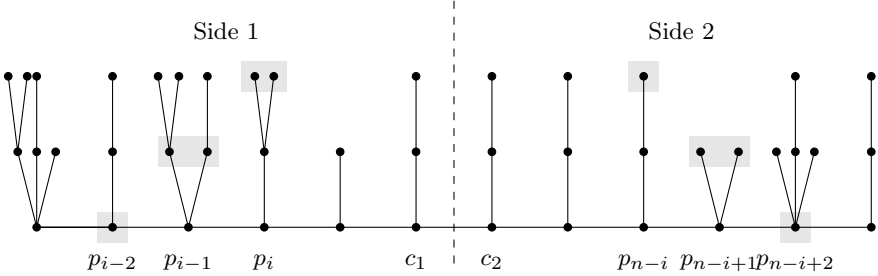
\begin{figure} [h]
\begin{center}
				\begin{tikzpicture}[scale=0.5]
				\fill [opacity=0.1] (-5.6,4.4) -- (-4.4,4.4) -- (-4.4,3.6) -- (-5.6,3.6) -- cycle;
				\fill [opacity=0.1] (-7.8,2.4) -- (-6.2,2.4) -- (-6.2,1.6) -- (-7.8,1.6) -- cycle;
				\fill [opacity=0.1] (-9.4,0.4) -- (-8.6,0.4) -- (-8.6,-0.4) -- (-9.4,-0.4) -- cycle;
				\fill [opacity=0.1] (8.6,0.4) -- (9.4,0.4) -- (9.4,-0.4) -- (8.6,-0.4) -- cycle;
				\fill [opacity=0.1] (6.2,2.4) -- (7.8,2.4) -- (7.8,1.6) -- (6.2,1.6) -- cycle;
				\fill [opacity=0.1] (4.6,4.4) -- (5.4,4.4) -- (5.4,3.6) -- (4.6,3.6) -- cycle;
				\draw (-9.0,0.0) -- (-9.0,2.0) -- (-9.0,4.0) (-11.0,0.0)-- (-10.5,2.0) (-11.0,0.0)-- (-11.0,2.0) (-11.0,0.0)-- (-11.5,2.0)	(-11.0,0.0)-- (-9.0,0.0)	(-11.0,2.0)-- (-11.0,4.0)	(-11.5,2.0)-- (-11.25,4.0)	(-11.5,2.0)-- (-11.75,4.0)	(-7.0,0.0)-- (-7.5,2.0)	(-7.0,0.0)-- (-6.5,2.0)	(-6.5,2.0)-- (-6.5,4.0)	(-7.5,2.0)-- (-7.25,4.0)	(-7.5,2.0)-- (-7.8,4.0) (-5.0,0.0)-- (-5.0,2.0)	(-5.0,2.0)-- (-5.25,4.0)	(-5.0,2.0)-- (-4.75,4.0)	(-3.0,0.0)-- (-3.0,2.0)	(-1.0,0.0)-- (-1.0,2.0)	(-1.0,2.0)-- (-1.0,4.0)	(1.0,0.0)-- (1.0,2.0)	(1.0,2.0)-- (1.0,4.0)	(3.0,0.0)-- (3.0,2.0) (3.0,2.0)-- (3.0,4.0)	(5.0,0.0)-- (5.0,2.0)	(5.0,2.0)-- (5.0,4.0)	(7.5,2.0)-- (7.0,0.0)			(9.0,0.0)-- (9.5,2.0)	(11.0,0.0)-- (11.0,2.0)	(11.0,2.0)-- (11.0,4.0)	(9.0,0.0)-- (8.5,2.0)	(9.0,0.0)-- (9.0,2.0)	(9.0,2.0)-- (9.0,4.0)	(7.0,0.0)-- (6.5,2.0)	(-11.0,0.0)-- (11.0,0.0);
				\draw  [fill] (-1.0,0.0) circle (0.1)	(1.0,0.0) circle (0.1)	(3.0,0.0) circle (0.1)	(5.0,0.0) circle (0.1)	(7.0,0.0) circle (0.1)	(-3.0,0.0) circle (0.1)	(-5.0,0.0) circle (0.1)	(-7.0,0.0) circle (0.1)	(-9.0,0.0) circle (0.1)	(9.0,0.0) circle (0.1)	(11.0,0.0) circle (0.1)	(-11.0,0.0) circle (0.1)	(-11.5,2.0) circle (0.1)	(-11.0,2.0) circle (0.1)	(-10.5,2.0) circle (0.1)	(-11.0,4.0) circle (0.1)	(-11.25,4.0) circle (0.1)	(-11.75,4.0) circle (0.1)	(-9.0,2.0) circle (0.1)	(-9.0,4.0) circle (0.1)	(-7.5,2.0) circle (0.1)	(-6.5,2.0) circle (0.1)	(-6.5,4.0) circle (0.1)	(-7.25,4.0) circle (0.1)	(-7.8,4.0) circle (0.1)	(-5.0,2.0) circle (0.1)	(-5.25,4.0) circle (0.1)	(-4.75,4.0) circle (0.1)	(-3.0,2.0) circle (0.1)	(-1.0,2.0) circle (0.1)	(-1.0,4.0) circle (0.1)	(1.0,2.0) circle (0.1)	(1.0,4.0) circle (0.1)	(3.0,2.0) circle (0.1) (3.0,4.0) circle (0.1)	(5.0,2.0) circle (0.1)	(5.0,4.0) circle (0.1)	(7.5,2.0) circle (0.1) (9.5,2.0) circle (0.1)	(11.0,2.0) circle (0.1)	(11.0,4.0) circle (0.1)	(8.5,2.0) circle (0.1)	(9.0,2.0) circle (0.1)	(9.0,4.0) circle (0.1)	(6.5,2.0) circle (0.1);
				\draw [dashed] (0,6)--(0,-1);
				\draw (-6,5.2)node{Side $1$};
				\draw (6,5.2)node{Side $2$};
				\draw (-9,-0.5)node[below]{$p_{i-2}$};
				\draw (-7,-0.5)node[below]{$p_{i-1}$};
				\draw (-5,-0.5)node[below]{$p_{i}$};
				\draw (-1,-0.5)node[below]{$c_1$};
				\draw (1,-0.5)node[below]{$c_2$};
				\draw (9,-0.5)node[below]{$p_{n-i+2}$};
				\draw (7,-0.5)node[below]{$p_{n-i+1}$};
				\draw (5,-0.5)node[below]{$p_{n-i}$};
				\end{tikzpicture}
			\end{center}
\caption{A lobster $G$ with odd diameter.} \label{diamodd}
\end{figure}

The vertices in the same box will have different representation with respect to $R$, because $R(H_i)$ is a resolving set for $H_i$. Now consider vertices in different boxes. Vertices in a box in Side 1 are with the same distance to vertices in $M:=\cup_{k=i+1}^n R(H_k)$, but they have distinct multiset representations with respect to $N:=R-M$, since the maximum distance to vertices in $N$ is distinct. This means that their multiset representations with respect to $R$ is also distinct. Similar argument can be applied to vertices in a box in Side 2.
			
Now let $u$ be a vertex in a box in Side $1$ and $v$ be a vertex in a box in Side $2$. Since $diam(G)$ is odd, $d(u,v)$ is also odd. Let $o$ and $\epsilon$ be the number of vertices in $R$ with odd and even distances to $u$, respectively. Since $o + \epsilon = |R|$ is odd, then $o \ne \epsilon$. Since $d(u,v)$ is odd, a vertex with odd distance to $u$ will have even distance to $v$ and vice versa. This means the number of vertices in $R$ with odd distance to $v$ is $\epsilon$ which is not equal to the number of vertices with odd distance to $u$. Therefore $r_m(u|R) \ne r_m(v|R)$.
			
\item[\textbf{Construction 2}: for even $diam(G)$. ] Here $H_{n/2}$ will be exactly in the middle and thus will not be included in any side. For $i=0,1,2$ we define $$a_i:=|\{w\in \cup_{i=0}^n R(H_i)|w\ \text{in Side 1 and }ecc(w)=diam(G)-i\}|$$ and $$b_i:=|\{w\in \cup_{i=0}^n R(H_i)|w\ \text{in Side 2 and }ecc(w)=diam(G)-i\}|.$$ By considering the vectors $(a_0,a_1,a_2)$ and $(b_0,b_1,b_2)$; the side with larger vector (lexicographically) will be named the dominant side, and if $(a_0,a_1,a_2)=(b_0,b_1,b_2)$ we name Side $1$ as the dominant side.
			
Assume that $n>2$. If Side $1$ is dominant, define $R=\{p_0,p_{n/2+1}\}\cup_{i=0}^n R(H_i)$, otherwise define $R=\{p_n,p_{n/2-1}\}\cup_{i=0}^n R(H_i)$. The proof that $R$ is an m-resolving set will only be given for the case when Side $1$ is dominant, the other case can be proved similarly.

\begin{figure}[h]		
			\begin{center}
				\begin{tikzpicture}[scale=0.6]
				\fill [opacity=0.1] (-5.6,4.4) -- (-4.4,4.4) -- (-4.4,3.6) -- (-5.6,3.6) -- cycle;
				\fill [opacity=0.1] (-7.8,2.4) -- (-6.2,2.4) -- (-6.2,1.6) -- (-7.8,1.6) -- cycle;
				\fill [opacity=0.1] (-9.4,0.4) -- (-8.6,0.4) -- (-8.6,-0.4) -- (-9.4,-0.4) -- cycle;
				\fill [opacity=0.1] (6.6,0.4) -- (7.4,0.4) -- (7.4,-0.4) -- (6.6,-0.4) -- cycle;
				\fill [opacity=0.1] (4.6,2.4) -- (5.4,2.4) -- (5.4,1.6) -- (4.6,1.6) -- cycle;
				\fill [opacity=0.1] (2.6,4.4) -- (3.4,4.4) -- (3.4,3.6) -- (2.6,3.6) -- cycle;
				\draw (-9.0,0.0) -- (-9.0,2.0) -- (-9.0,4.0) (-11.0,0.0)-- (-10.5,2.0) (-11.0,0.0)-- (-11.0,2.0) (-11.0,0.0)-- (-11.5,2.0)	(-11.0,0.0)-- (-9.0,0.0)	(-11.0,2.0)-- (-11.0,4.0)	(-11.5,2.0)-- (-11.25,4.0)	(-11.5,2.0)-- (-11.75,4.0)	(-7.0,0.0)-- (-7.5,2.0)	(-7.0,0.0)-- (-6.5,2.0)	(-6.5,2.0)-- (-6.5,4.0)	(-7.5,2.0)-- (-7.25,4.0)	(-7.5,2.0)-- (-7.8,4.0) (-5.0,0.0)-- (-5.0,2.0)	(-5.0,2.0)-- (-5.25,4.0)	(-5.0,2.0)-- (-4.75,4.0)	(-3.0,0.0)-- (-3.0,2.0)	(-1.0,0.0)-- (-1.0,2.0)	(-1.0,2.0)-- (-1.0,4.0)	(1.0,0.0)-- (1.0,2.0)	(1.0,2.0)-- (1.0,4.0)	(3.0,0.0)-- (3.0,2.0) (3.0,2.0)-- (3.0,4.0)	(5.0,0.0)-- (5.0,2.0)	(5.0,2.0)-- (5.0,4.0)	(7.5,2.0)-- (7.0,0.0)	(9.0,0.0)-- (9.5,2.0)	(9.0,0.0)-- (8.5,2.0)	(9.0,0.0)-- (9.0,2.0)	(9.0,2.0)-- (9.0,4.0)	(7.0,0.0)-- (6.5,2.0)	(-11.0,0.0)-- (9.0,0.0);
				\draw  [fill] (-1.0,0.0) circle (0.1)	(1.0,0.0) circle (0.1)	(3.0,0.0) circle (0.1)	(5.0,0.0) circle (0.1)	(7.0,0.0) circle (0.1)	(-3.0,0.0) circle (0.1)	(-5.0,0.0) circle (0.1)	(-7.0,0.0) circle (0.1)	(-9.0,0.0) circle (0.1)	(9.0,0.0) circle (0.1)	(-11.0,0.0) circle (0.1)	(-11.5,2.0) circle (0.1)	(-11.0,2.0) circle (0.1)	(-10.5,2.0) circle (0.1)	(-11.0,4.0) circle (0.1)	(-11.25,4.0) circle (0.1)	(-11.75,4.0) circle (0.1)	(-9.0,2.0) circle (0.1)	(-9.0,4.0) circle (0.1)	(-7.5,2.0) circle (0.1)	(-6.5,2.0) circle (0.1)	(-6.5,4.0) circle (0.1)	(-7.25,4.0) circle (0.1)	(-7.8,4.0) circle (0.1)	(-5.0,2.0) circle (0.1)	(-5.25,4.0) circle (0.1)	(-4.75,4.0) circle (0.1)	(-3.0,2.0) circle (0.1)	(-1.0,2.0) circle (0.1)	(-1.0,4.0) circle (0.1)	(1.0,2.0) circle (0.1)	(1.0,4.0) circle (0.1)	(3.0,2.0) circle (0.1)	(3.0,4.0) circle (0.1)	(5.0,2.0) circle (0.1)	(5.0,4.0) circle (0.1)	(7.5,2.0) circle (0.1)	(9.5,2.0) circle (0.1)	(8.5,2.0) circle (0.1)	(9.0,2.0) circle (0.1)	(9.0,4.0) circle (0.1)	(6.5,2.0) circle (0.1);
				\draw [dashed] (0,6)--(0,-1);
				\draw [dashed] (-2,6)--(-2,-1);
				\draw (-6,5.2)node{Side $1$};
				\draw (4,5.2)node{Side $2$};
				\draw (-11,-0.5)node[below]{$p_0$};
				\draw (-1,-0.5)node[below]{$p_{n/2}$};
				\draw (1,-0.5)node[below]{$p_{n/2+1}$};
				\draw (9,-0.5)node[below]{$p_{n}$};
				\end{tikzpicture}
			\end{center}
\caption{A lobster $G$ with even diameter.} \label{diameven}
\end{figure}
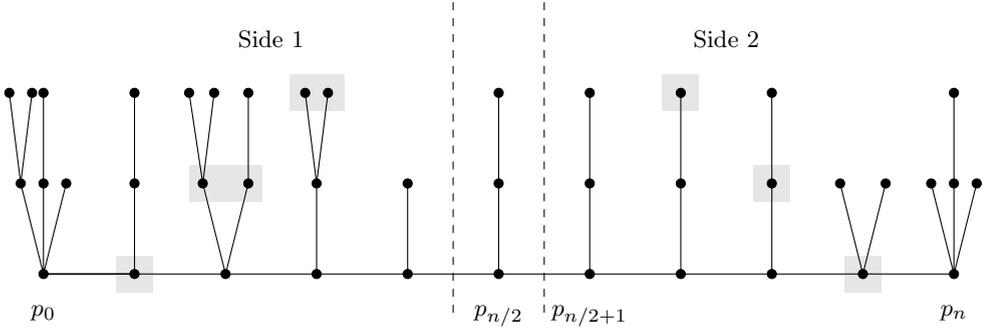
			
Similar argument from Construction 1 can be applied to prove that the vertices in the same side have distinct representations. Let $u$ and $v$ be vertices with the same eccentricity in Side $1$ and side $2$, respectively. For $i=0,1,\ldots,ecc(v)$, we define $m_v(i)$ as the multiplicity of $i$ in $r_m(v|R)$. If $v$ is in $H_{n/2+1}$, then $m_v:=(m_v(ecc(v)),m_v(ecc(v)-1),m_v(ecc(v)-2))=(a_0,a_1,a_2+b_0+1)$, otherwise $m_v=(a_0,a_1,a_2+1)$. If $u$ is in $H_{n/2-1}$, then $m_u=(b_0,b_1,b_2+a_0)$, otherwise $m_u=(b_0,b_1,b_2)$.
			
In cases other than $u\in V(H_{n/2-1})$ and $v\notin V(H_{n/2}+1)$, we have $m_v > m_u$, and so $r_m(u|R)\ne r_m(v|R)$. Therefore we only need to prove the case when $u \in H_{n/2-1}$ and $v\notin H_{n/2+1}$. Consider the lobster in Figure \ref{uv}, where the black vertices indicate the members of $R$.
\begin{figure} [h]
			\begin{center}
				\begin{tikzpicture}[scale=0.6]
				\draw [fill,opacity=0.1] (-3.5,4.4)--(-2.5,4.4)--(-2.5,3.6)--(-3.5,3.6)--cycle;
				\draw [fill,opacity=0.1] (3.25,2.4)--(4.25,2.4)--(4.25,1.6)--(3.25,1.6)--cycle;
				\draw (-5.0,0.0)-- (-5.0,2.0);
				\draw (-3.0,0.0)-- (-3.5,2.0);
				\draw (-3.0,0.0)-- (-2.5,2.0);
				\draw (-1.0,0.0)-- (-1.0,2.0);
				\draw (-1.0,2.0)-- (-1.0,4.0);
				\draw (1.0,0.0)-- (1.0,2.0);
				\draw (1.0,2.0)-- (1.0,4.0);
				\draw (3.0,0.0)-- (3.5,2.0);
				\draw (3.5,2.0)-- (3.5,4.0);
				\draw (-3.5,2.0)-- (-3.25,4.0);
				\draw (-3.5,2.0)-- (-3.75,4.0);
				\draw (-2.5,2.0)-- (-2.25,4.0);
				\draw (-2.5,2.0)-- (-2.75,4.0);
				\draw (2.5,2.0)-- (2.75,4.0);
				\draw (2.5,2.0)-- (2.25,4.0);
				\draw (2.5,2.0)-- (3.0,0.0);
				\draw (3.0,0.0)-- (3.0,2.0);
				\draw (3.0,0.0)-- (4.0,2.0);
				\draw (-9.0,0.0)-- (7.0,0.0);
				\draw (5.0,0.0)-- (5.0,2.0);
				\draw (5.0,2.0)-- (5.0,4.0);
				\draw (5.0,0.0)-- (5.5,2.0);
				\draw (5.0,0.0)-- (4.5,2.0);
				\draw [fill=white] (-1.0,0.0) circle (0.15);
				\draw [fill] (1.0,0.0) circle (0.15);
				\draw [fill=white] (3.0,0.0) circle (0.15);
				\draw [fill=white] (5.0,0.0) circle (0.15);
				\draw [fill=white] (-3.0,0.0) circle (0.15);
				\draw [fill=white] (-5.0,0.0) circle (0.15);
				\draw [fill] (-5.0,2.0) circle (0.15);
				\draw [fill] (-3.5,2.0) circle (0.15);
				\draw [fill=white] (-2.5,2.0) circle (0.15);
				\draw [fill] (-1.0,2.0) circle (0.15);
				\draw [fill] (-1.0,4.0) circle (0.15);
				\draw [fill] (1.0,2.0) circle (0.15);
				\draw [fill] (1.0,4.0) circle (0.15);
				\draw [fill=white] (3.5,2.0) circle (0.15);
				\draw [fill] (3.5,4.0) circle (0.15);
				\draw [fill] (-3.75,4.0) circle (0.15);
				\draw [fill=white] (-2.75,4.0) circle (0.15);
				\draw [fill] (-2.25,4.0) circle (0.15);
				\draw [fill=white] (-3.25,4.0) circle (0.15);
				\draw [fill] (2.5,2.0) circle (0.15);
				\draw [fill] (2.75,4.0) circle (0.15);
				\draw [fill=white] (2.25,4.0) circle (0.15);
				\draw [fill] (3.0,2.0) circle (0.15);
				\draw [fill=white] (4.0,2.0) circle (0.15);
				\draw [fill] (5.0,2.0) circle (0.15);
				\draw [fill] (5.0,4.0) circle (0.15);
				\draw [fill=white] (5.5,2.0) circle (0.15);
				\draw [fill] (4.5,2.0) circle (0.15);
				\draw [fill=white] (-7.0,0.0) circle (0.15);
				\draw (-5,5.2)node{Side $1$};
				\draw (3,5.2)node{Side $2$};
				\draw [dashed] (0,6)--(0,-1);
				\draw [dashed] (-2,6)--(-2,-1);
				\draw (-1,-0.5)node[below]{$p_{n/2}$};
				\draw (1,-0.5)node[below]{$p_{n/2+1}$};
				\draw (-3,-0.5)node[below]{$p_{n/2-1}$};
				\draw (-3,4.3)node[above]{$u$};
				\draw (3.95,2.3)node[above]{$v$};
				\end{tikzpicture}
			\end{center}
\caption{Illustration for the case when $u \in H_{n/2-1}$ and $v\notin H_{n/2+1}$.} \label{uv}
\end{figure}
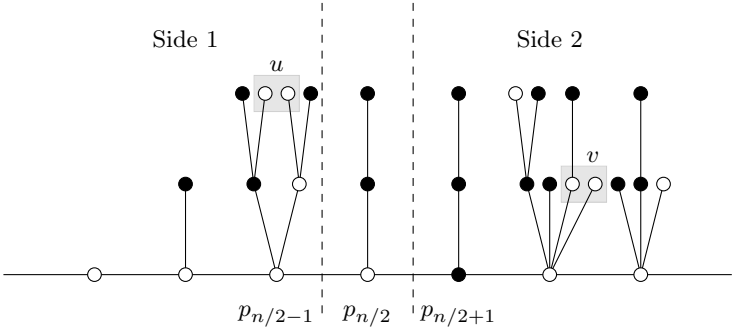			

Since $p_{n/2+1} \in R$, then $2\in r_m(v|R)$. In this case, $u$ is a leaf of an $S_4$ component in $[H_{n/2-1}]$ and $u\notin R$. If $v\notin R$, then by the maximality of $R(H_{n/2+1})$, there exists a component in $[H_{n/2+1}]$ with type 1 or type 12, and so $m_v(2)\geq 2$. However $m_u(2)\leq 1$, which leads to $rm(u|R)\ne rm(v|R)$.
			
\end{description}

In both constructions, we we prove that $R$ is an m-resolving set for $G$, and therefore $md(G)$ is finite.
	
\item[\textbf{$(1)\Rightarrow (2)$} ] Let $R$ be an m-resolving set of $G$ and suppose that $G-E(P)$ has a component $H \not\approx S_4$ with infinite multiset dimension. Let $v$ be the vertex in $H$ which is also in $P$. We will prove that there are two vertices $x$ and $y$ in $H$ with the same distance to $v$ and thus $r_m(x|R_H)=r_m(y|R_H)$.
			
Let $N_H(v)=\{v_1,v_2,\cdots,v_{deg(v)}\}$. If there exists a vertex $v_i$ in $N_H(v)$ with degree at least $4$ then there is at least $3$ leaves attached to $u$, and by Lemma \ref{twins}, there exists two vertices with the same representation. Now assume that $deg(v_i)\leq 3$ for all $i$. By the assumption of $H$, we have $deg(v)\geq2$. If $[H]$ contains an $S_4$ with two leaves other than $v$, then both are either in $R$ or not in $R$, and so the two leaves will have the same representation. Now the only case to consider is when the components of $[H]$ are a $P_2$, a $P_3$, or an $S_4$ with exactly one leaf (other than $v$) in $R$. If all of the components of $[H]$ has different types then we already established that $md(H)$ is finite. So there are components with the same type which means the neighbors of $v$ in those components will have the same representation.
			
Let $x$ and $y$ be the two vertices in $H$ with the same distance to $v$ with $r_m(x|R_H)=r_m(y|R_H)$. The path from $x$ or $y$ to any vertex $r$ in $R-R_H$ goes through $v$, and so $d(r,x)=d(r,y)$. Thus, $r_m(x|R)=r_m(y|R)$, a contradiction. We conclude that $H$ is either with finite multiset dimension or is an $S_4$.
\end{itemize}
\end{proof}

Note that if we let $P=p_0p_1\dots p_n$ to be any $2$-center path, not necessarily the minimum, then it is possible that the components of $G-E(P)$ satisfy (3) but not (1). One example is when $H_0=K_1$ and $H_1=P_3$, since $p_1$ has 3 leaves as neighbors.

If either $P$ is a path not containing an end-vertex; or $H_0$ and $H_n$ are either a $P_2$ or a $P_3$; or $[H_1]$ and $[H_{n-1}]$ only have at most $3$ components which are either a $P_2$, a $P_3$, or an $S_4$, with at most one $S_4$ and two $P_2$s, then Theorem \ref{Lobster} still hold. These assumptions are redundant for characterizing lobsters since they are equivalent with (3) in the theorem. However they could be used to characterise caterpillar with finite multiset dimension as stated in the following.

\begin{theorem}\label{Catterpillar}
Let $G$ be a caterpillar with $P$ its minimum $1$-center-path. $md(G)$ is finite if and only if every vertex in $P$ has at most $2$ neighbors in $G-P$.
\end{theorem}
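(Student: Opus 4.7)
My plan is to prove the two directions separately. The forward implication follows from a quick twin argument using Lemma \ref{twins}, while the reverse implication reduces to Theorem \ref{Lobster} applied to the caterpillar, viewed as a lobster whose minimum $1$-center-path $P$ plays the role of the $2$-center-path.

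For necessity, suppose some $p_i \in P$ has three distinct neighbors $\ell_1, \ell_2, \ell_3$ in $G - P$. Since $G$ is a caterpillar, each $\ell_j$ is a leaf whose unique neighbor is $p_i$, so the $\ell_j$'s are pairwise twins and $|\ell_1^*| \geq 3$. Lemma \ref{twins} then gives $md(G) = \infty$. Contrapositively, finiteness of $md(G)$ forces every vertex of $P$ to have at most two neighbors in $G - P$.

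For sufficiency, assume every vertex of $P$ has at most two neighbors in $G - P$. Then each component $H_i$ of $G - E(P)$ is a star $K_{1,k_i}$ rooted at $p_i$ with $k_i \leq 2$; consequently $[H_i]$ consists of at most two $P_2$-components (with no $P_3$ or $S_4$), so condition (3) of Theorem \ref{Lobster} holds trivially. I will also need the supporting fact that $P$ contains no end-vertex of $G$: by minimality of $P$ as a $1$-center-path, each endpoint $p_0$ and $p_n$ has at least one leaf-neighbor in $G - P$ --- otherwise, say $p_0$ would itself be a leaf of $G$ and $p_1 \ldots p_n$ would be a shorter $1$-center-path, contradicting minimality --- and any such leaf-neighbor has strictly larger eccentricity than the endpoint it hangs off.

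The main obstacle is that Theorem \ref{Lobster} is stated for the \emph{minimum} $2$-center-path, whereas the minimum $1$-center-path of a caterpillar can be strictly longer than the minimum $2$-center-path of the same graph, so the theorem does not apply verbatim. This is precisely what the remark immediately following Theorem \ref{Lobster} resolves: since $P$ is a $2$-center-path that contains no end-vertex, the equivalence $(1) \Leftrightarrow (3)$ still applies to $P$ as chosen. Hence $md(G) < \infty$, completing the sufficiency direction.
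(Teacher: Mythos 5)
Your proof is correct and follows essentially the route the paper intends: the paper leaves Theorem \ref{Catterpillar} without an explicit proof, but the remark immediately preceding it (listing the alternative hypotheses under which Theorem \ref{Lobster} survives for a non-minimum $2$-center-path) is exactly the mechanism you invoke for sufficiency, and the twin argument via Lemma \ref{twins} is the evident necessity argument. Your verification that the minimum $1$-center-path contains no end-vertex (and, implicitly, that $H_0$ and $H_n$ are a $P_2$ or $P_3$) correctly discharges the hypothesis needed to transfer the lobster theorem to the caterpillar's spine.
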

	
In Theorem \ref{Catterpillar}, it is necessary for $P=p_0p_1\cdots p_n$ to be a minimum $1$-center path. An example to show its necessity is when $p_0$ is a leaf and $p_1$ have two neighbors in $G-P$.


Theorems \ref{Catterpillar} and \ref{Lobster} partially answered the question proposed in Problem \ref{chartree}. However, to characterize all trees with finite multiset dimension, we might have to use a different approach. We believe that applying our argument inductively to the size of the minimum center-path of a tree will be difficult to prove.

\begin{acknowledgement}
This research was partially supported by Penelitian Dasar Unggulan Perguruan Tinggi 2017-2019, funded by Indonesian Ministry of Research, Technology and Higher Education.
\end{acknowledgement}


\end{document}